\newtheorem{theorem}{Theorem}
\newtheorem{lemma}{Lemma}
\newcommand{\Q}{\mathbb{Q}}
\newcommand{\ve}{\varepsilon}
\begin{document}

\begin{abstract}
A deterministic sequence of real numbers in the unit interval is called \emph{equidistributed} if its empirical distribution converges to the uniform distribution. Furthermore, the limit distribution of the pair correlation statistics of a sequence is called \emph{Poissonian} if the number of pairs $x_k,x_l \in (x_n)_{1 \leq n \leq N}$ which are within distance $s/N$ of each other is asymptotically $\sim 2sN$. A randomly generated sequence has both of these properties, almost surely. There seems to be a vague sense that having Poissonian pair correlations is a ``finer'' property than being equidistributed. In this note we prove that this really is the case, in a precise mathematical sense: a sequence whose asymptotic distribution of pair correlations is Poissonian must necessarily be equidistributed. Furthermore, for sequences which are not equidistributed we prove that the square-integral of the asymptotic density of the sequence gives a lower bound for the asymptotic distribution of the pair correlations.
\end{abstract}

\title[]{Pair correlations and equidistribution}

\author{Christoph Aistleitner} 
\address{Christoph Aistleitner, Institute of Analysis and Number Theory, TU Graz, Austria}
\email{aistleitner@math.tugraz.at}

\author{Thomas Lachmann} 
\address{Thomas Lachmann, Institute of Analysis and Number Theory, TU Graz, Austria}
\email{lachmann@math.tugraz.at}

\author{Florian Pausinger}
\address{Florian Pausinger, Chair of Geometry and Visualization, TU Munich, Germany}
\email{florian.pausinger@ma.tum.de}

\thanks{The first two authors are supported by the Austrian Science Fund (FWF), project Y-901.}

\maketitle

Let $(x_n)_{n \geq 1}$ be a sequence of real numbers. We say that this sequence is \emph{equidistributed} or \emph{uniformly distributed modulo one} if asymptotically the relative number of fractional parts of elements of the sequence falling into a certain subinterval is proportional to the length of this subinterval. More precisely, we require that
$$
\lim_{N \to \infty} ~\frac{1}{N} ~\# \Big\{ 1 \leq n \leq N:~\{ x_n \} \in [a,b] \Big\} = b-a
$$
for all $0 \leq a \leq b \leq 1$, where $\{ \cdot\}$ denotes the fractional part. This notion was introduced in the early twentieth century, and received widespread attention after the publication of Hermann Weyl's seminal paper \emph{\"Uber die Gleich\-ver\-teilung von Zahlen mod. Eins} in 1916 \cite{weyl}. Among the most prominent results in the field are the facts that the sequences $(n \alpha)_{n \geq 1}$ and $(n^2 \alpha)_{n \geq 1}$ are equidistributed whenever $\alpha \not\in \Q$, and the fact that for any distinct integers $n_1, n_2, \dots$ the sequence $(n_k \alpha)_{k \geq 1}$ is equidistributed for almost all $\alpha$. We note that when $(X_n)_{n \geq 1}$ is a sequence of independent, identically distributed (i.i.d.) random variables having uniform distribution on $[0,1]$, then by the Glivenko--Cantelli theorem this sequence is almost surely equidistributed. Consequently, in a very vague sense equidistribution can be seen as an indication of ``pseudorandom'' behavior of a deterministic sequence. For more information on uniform distribution theory, see the monographs \cite{dts,knu}.\\

The investigation of pair correlations can also be traced back to the beginning of the twentieth century, when such quantities appeared in the context of statistical mechanics. In our setting, when $(x_n)_{n \geq 1}$ are real numbers in the unit interval, we define a  function $F_N(s)$ as 
\begin{equation} \label{fdef}
F_N(s) = \frac{1}{N} \left\{1 \leq m, n \leq N,~m \neq n:~\|x_m - x_n\| \leq \frac{s}{N} \right\},
\end{equation}
and
$$
F(s) = \lim_{N \to \infty} F_N(s),
$$
provided that such a limit exists; here $s \geq 0$ is a real number, and $\| \cdot \|$ denotes the distance to the nearest integer. The function $F_N(s)$ counts the number of pairs $(x_m,x_n),~1 \leq m,n \leq N, ~m \neq n,$ of points which are within distance at most $s/N$ of each other (in the sense of the distance on the torus). If $F(s) = 2s$ for all $s \geq 0$, then we say that the asymptotic distribution of the pair correlations of the sequence is \emph{Poissonian}. Again, one can show that an i.i.d.\ random sequence (sampled from the uniform distribution on $[0,1]$) has this property, almost surely. Questions concerning the distribution of pair correlations of sequences such as $(\{n \alpha\})_{n \geq 1}$ or $(\{n^2 \alpha\})_{n \geq 1}$ are linked with statistical properties of the distribution of the energy spectra of certain quantum systems, and thus play a role for the Berry--Tabor conjecture \cite{bt}. See \cite{mark,rs} for more information on this connection. It turns out that unlike the situation in the case of equidistribution, there is \emph{no} value of $\alpha$ for which the sequence $(\{n \alpha\})_{n \geq 1}$ has Poissonian pair correlations, and the question whether the pair correlations of the sequence $(\{n^2 \alpha\})_{n \geq 1}$ are Poissonian or not depends on delicate number-theoretic properties of $\alpha$, in particular on properties concerning Diophantine approximation and the continued fraction expansion of $\alpha$. Here many problems are still open \cite{heath,rsz}. Furthermore, for $(n_k)_{k \geq 1}$ being distinct integers the question whether $(\{n_k \alpha\})_{k \geq 1}$ has Poissonian pair correlations for almost all $\alpha$ or not depends on certain number-theoretic properties of $(n_k)_{k \geq 1}$, in particular on the number of possible ways to represent integers as a difference of elements of this sequence; see \cite{all}.\\

It is remarkable that (to the best of our knowledge) the relation between these two notions (being equidistributed, and having Poissonian pair correlations) has never been clarified, a fact which came to our attention by a question asked by Arias de Reyna \cite{arias} in a slightly different, but related context (we will repeat this question at the end of the present section). As a starting observation, we note that in a probabilistic sense Poissonian pair correlations actually require uniform distribution. More precisely, assume that $(X_n)_{n \geq 1}$ are i.i.d.\ random variables, which for simplicity we assume to have a density $g(x)$ on $[0,1]$. Then we have
\begin{eqnarray}
& & \mathbb{E} \left( \frac{1}{N} \left\{1 \leq m, n \leq N,~m \neq n:~\|X_m - X_n\| \leq \frac{s}{N} \right\} \right) \nonumber\\
& \approx & \frac{1}{N} ~N^2 \int_0^1 g(x) \underbrace{\int_{x-\frac{s}{N}}^{x+\frac{s}{N}} g(y) ~dy}_{\approx \frac{2s}{N} g(x)} ~dx \nonumber\\
& \approx & 2s \int_0^1 g(x)^2~dx, \nonumber
\end{eqnarray}
which can be turned into a rigorous argument to show that almost surely
\begin{equation} \label{fs2}
F(s) = 2 s \int_0^1 g(x)^2~dx, \qquad s > 0,
\end{equation}
in this case (and $F(s)=\infty$ for all $s>0$, almost surely, in the case when the distribution of the $X_n$ is not absolutely continuous with respect to the Lebesgue measure). Now we clearly have $\int_0^1 g(x)^2~dx = 1$ if and only if $g(x) \equiv 1$, which means that $g$ is the density of the uniform distribution. Thus Poissonian pair correlations require uniform distribution in a probabilistic sense; however, it is a priori by no means clear that a similar relation also holds for the case of deterministic sequences $(x_n)_{n \geq 1}$. Our Theorem \ref{th1} below shows that this actually is the case.

\begin{theorem} \label{th1}
Let $(x_n)_{n \geq 1}$ be a sequence of real numbers in $[0,1]$, and assume that the distribution of its pair correlation statistic is asymptotically Poissonian. Then the sequence is equidistributed.
\end{theorem}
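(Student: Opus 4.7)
The approach is by contradiction: extract a weakly convergent subsequence of empirical measures, then derive a lower bound on the \emph{integrated} pair correlation $\int_0^s F_N(u)\,du$ which, combined with the Poissonian hypothesis, forces the limit measure to equal Lebesgue measure. So suppose $(x_n)_{n\geq 1}$ is not equidistributed. By weak compactness of Borel probability measures on $[0,1]$, pass to a subsequence $(N_j)$ along which $\mu_{N_j} := \frac{1}{N_j}\sum_{n\leq N_j}\delta_{\{x_n\}}$ converges weakly to some $\mu \neq \lambda$. Fix an integer $K$ and a starting point $t\in[0,1)$ so that the $K$ consecutive arcs $I_1,\dots,I_K$ of length $\delta=1/K$ starting at $t$ have endpoints at $\mu$-continuity points (possible since $\mu$ has at most countably many atoms). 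Set $g_k:=\mu(I_k)$ and $N_k:=\#\{n\leq N:\{x_n\}\in I_k\}$, so $N_k/N_j\to g_k$ and $\sum_k g_k=1$.

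For each bin $I_k$ and each $N$ in the subsequence, introduce the kernel $h_k(y):=\#\{n\leq N:\{x_n\}\in I_k,\ \|\{x_n\}-y\|\leq s/(2N)\}$. Expanding $\int h_k^2\,dy$ as $\sum_{m,n:\,x_m,x_n\in I_k}(s/N-\|x_m-x_n\|)_+$, separating the diagonal, and using $(r-d)_+=\int_0^r\mathbf{1}[d\leq t]\,dt$ with Fubini gives the identity
$$
\int h_k(y)^2\,dy \;=\; \frac{sN_k}{N} + \int_0^s F_N^{(k)}(u)\,du,
$$
where $F_N^{(k)}$ denotes the contribution to $F_N$ from pairs both lying in $I_k$. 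Cauchy--Schwarz on the support of $h_k$ (an arc of length $\delta+s/N$) gives $\int h_k^2\geq (sN_k/N)^2/(\delta+s/N)$. Summing over $k$ and using $F_N\geq\sum_k F_N^{(k)}$,
$$
\int_0^s F_N(u)\,du \;\geq\; \frac{s^2\sum_k (N_k/N)^2}{\delta+s/N} - s.
$$
Taking $N=N_j\to\infty$, the right side converges to $s^2 K\sum_k g_k^2 - s$; meanwhile the pointwise convergence $F_N(u)\to 2u$ combined with monotonicity of $F_N$ in $u$ and the bound $F_N(u)\leq F_N(s)\to 2s$ on $[0,s]$ allows dominated convergence to conclude $\int_0^s F_N(u)\,du\to s^2$.

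Combining these two limits yields $s^2\geq s^2 K\sum_k g_k^2 - s$ for every $s>0$, hence $K\sum_k g_k^2\leq 1$. Cauchy--Schwarz gives $K\sum_k g_k^2\geq 1$ with equality iff $g_k\equiv 1/K$, so $\mu(I_k)=1/K$ for every $k$. Since $K$ and the shift $t$ were arbitrary subject to avoiding countably many points, $\mu$ must equal Lebesgue measure, a contradiction. The main obstacle is obtaining the sharp constant in the lower bound: a direct pigeonhole argument on pairs within distance $s/N$ only yields $F_N(s)\gtrsim sK\sum_k g_k^2 - 1$, which matched against $F(s)=2s$ forces merely $K\sum_k g_k^2\leq 2$ and is too weak to pin down $\mu$. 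Integrating in $s$ effectively replaces the hard indicator by the triangular kernel $(s/N-\|x\|)_+$, recovering the missing factor of $2$ that makes Cauchy--Schwarz tight and identifies $\mu$ with Lebesgue measure.
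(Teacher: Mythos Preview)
Your proof is correct and shares the paper's central insight---that averaging over the scale parameter $s$ recovers the sharp constant that a single-$s$ pigeonhole argument loses---but the implementation is genuinely different. The paper partitions $[0,1]$ into $M\sim N$ bins, interprets the resulting pair-count $H_{N,M}(s)$ as a quadratic form in the bin occupancies attached to a circulant matrix, identifies the eigenvalues as values of the Dirichlet kernel, and then averages over $s=1,\dots,S$ so that the eigenvalues become values of the (non-negative) Fej\'er kernel; this yields $\tfrac{1}{S}\sum_{s\leq S}H_{N,M}(s)\geq SN^2/M$ and hence, after splitting $[0,1]$ into just two arcs $[0,a]$ and $[a,1]$, the key inequality involving $b^2/a+(1-b)^2/(1-a)>1$. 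You instead keep a \emph{fixed} coarse partition into $K$ arcs, introduce the occupation kernel $h_k(y)=\#\{n\leq N:x_n\in I_k,\,\|x_n-y\|\leq s/(2N)\}$, and exploit the elementary identity $\int h_k^2=(sN_k/N)+\int_0^s F_N^{(k)}(u)\,du$ together with Cauchy--Schwarz on the support of $h_k$. Your continuous integration $\int_0^s F_N(u)\,du$ plays exactly the role of the paper's discrete Ces\`aro average $\tfrac{1}{S}\sum_{s\leq S}F_N(s)$, and the triangular kernel $(s/N-\|x\|)_+$ arising from your integration is precisely the continuous analogue of the Fej\'er weights. What you gain is that the spectral analysis of circulant matrices is replaced by a one-line Cauchy--Schwarz, making the argument more elementary; what the paper's route buys is a cleaner connection to harmonic analysis and a lemma (Lemma~1) that is reusable for the quantitative density statement of Theorem~2. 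Both arguments pivot on the same fact: the naive lower bound is off by a factor of two, and an $s$-average (discrete or continuous) repairs it.
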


There also is a quantitative ``density'' version of the theorem, which has a resemblance of \eqref{fs2}, and which we state as Theorem \ref{th2} below.

\begin{theorem} \label{th2}
Let $(x_n)_{n \geq 1}$ be a sequence of real numbers in $[0,1]$. Assume that it has an asymptotic distribution function $G(x)$ on $[0,1]$, i.e., that there is a function $G(x)$ such that
$$
G(x) = \lim_{N \to \infty} \frac{1}{N} \# \Big\{ 1 \leq n \leq N:~x_n \in [0,x] \Big\}, \qquad x \in [0,1].
$$
Assume also that there is a function $F(s):~[0,\infty) \mapsto [0,\infty]$ such that
$$
F(s) = \lim_{N \to \infty} \frac{1}{N} \left\{1 \leq m, n \leq N,~m \neq n:~\|x_m - x_n\| \leq \frac{s}{N} \right\}, \qquad \text{$s > 0$}.
$$
Then the following hold. 
\begin{itemize}
 \item If $G$ is not absolutely continuous, then $F(s) = \infty$ for all $s > 0$.
 \item If $G$ is absolutely continuous, then, writing $g$ for the density function of the correpsonding measure, we have
\begin{equation} \label{fs22}
\limsup_{s \to \infty} \frac{F(s)}{2s} \geq \int_0^1 g(x)^2~dx.
\end{equation}
\end{itemize}
\end{theorem}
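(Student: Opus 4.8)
The plan is to work not with $F_N(s)$ directly but with its primitive $\int_0^S F_N(s)\,ds$, which turns out to be governed by the $L^2$-norm of a local counting function; the correct constant in \eqref{fs22} will then be recovered by comparing the resulting Cesàro average with the $\limsup$. For $L\in(0,1/2]$ put $A_N(t,L)=\#\{1\le n\le N:~x_n\in[t,t+L)\bmod 1\}$. The first ingredient is the elementary identity
\[
\big(L-\|x_m-x_n\|\big)^{+}\;=\;\vol\big\{t\in[0,1):~x_m,x_n\in[t,t+L)\bmod 1\big\},
\]
which after summation over $1\le m,n\le N$ and Fubini becomes $\sum_{m,n}(L-\|x_m-x_n\|)^{+}=\int_0^1 A_N(t,L)^2\,dt$. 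On the other hand, interchanging sum and integral in $\int_0^S F_N(s)\,ds=\frac1N\sum_{m\neq n}\int_0^S\mathbf 1[\|x_m-x_n\|\le s/N]\,ds$ and separating the diagonal gives, for $0<S\le N/2$,
\[
\int_0^S F_N(s)\,ds\;=\;\int_0^1 A_N(t,S/N)^2\,dt\;-\;S.
\]

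Next I would bound the right-hand side from below. Fix a finite partition $\mathcal{P}=\{K_1,\dots,K_M\}$ of $[0,1)$ into intervals (not necessarily equal). Cauchy--Schwarz gives $\int_{K_j}A_N(\cdot,S/N)^2\ge |K_j|^{-1}\big(\int_{K_j}A_N(\cdot,S/N)\big)^2$, and a direct computation shows $\int_{K_j}A_N(t,S/N)\,dt\ge S\,\mu_N(K_j^{-})$, where $\mu_N=\frac1N\sum_{n\le N}\delta_{x_n}$ and $K_j^{-}$ is $K_j$ contracted by $S/N$ at its left endpoint (points near the endpoints contribute nonnegatively and are simply discarded). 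Summing over $j$ yields $\int_0^S F_N(s)\,ds\ge S^2\sum_{j}|K_j|^{-1}\mu_N(K_j^{-})^2-S$. Letting $N\to\infty$ with $S$ and $\mathcal{P}$ fixed (choosing the endpoints of $\mathcal{P}$ away from the at most countably many atoms of the limit measure $\nu$), using $\mu_N\to\nu$ weakly together with $S/N\to0$, and — provided $F(S)<\infty$ — dominated convergence on the left (legitimate since $F_N(s)\le F_N(S)\to F(S)$ for $s\le S$), I arrive at
\[
\frac1{S^2}\int_0^S F(s)\,ds\;\ge\;\sum_{K\in\mathcal{P}}\frac{\nu(K)^2}{|K|}\;-\;\frac1S .
\]

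To finish, suppose first $L_0:=\limsup_{s\to\infty}F(s)/(2s)<\infty$ (otherwise \eqref{fs22} is trivial). Then $F(s)\le 2(L_0+\eps)s$ for $s$ large, so $\limsup_{S\to\infty}\frac1{S^2}\int_0^S F(s)\,ds\le L_0$; combining this with the previous display and sending $S\to\infty$ gives $L_0\ge\sum_{K\in\mathcal{P}}\nu(K)^2/|K|$ for every admissible $\mathcal{P}$. I would then invoke the standard fact that $\sup_{\mathcal{P}}\sum_{K\in\mathcal{P}}\nu(K)^2/|K|$ equals $\int_0^1 g(x)^2\,dx$ when $\nu$ is absolutely continuous with density $g$ (Jensen/martingale convergence along refining partitions), and equals $+\infty$ otherwise (contract one cell of $\mathcal{P}$ onto the support of the singular part and apply Cauchy--Schwarz). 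This proves \eqref{fs22} in the absolutely continuous case; in the non-absolutely-continuous case it forces $\frac1{S^2}\int_0^S F(s)\,ds=\infty$ for every $S>0$, contradicting $F(S)<\infty$, so that $F(s)=\infty$ for all $s>0$.

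The main obstacle I anticipate is getting the constant right. The naive approach — partition $[0,1)$ into cells of length $s/N$, bound $NF_N(s)$ below by the within-cell pair counts, and apply Cauchy--Schwarz — loses a factor of $2$, and (as the example $x_n=n/(2N)$ shows) this loss is genuine and is not repaired by averaging over shifts of the grid at the single scale $s/N$. Passing to the primitive $\int_0^S F_N(s)\,ds$ is exactly what cures this: there the grid of length $S/N$ accounts faithfully for all pairs at rescaled distance $\le S$ and the Cauchy--Schwarz step becomes asymptotically sharp, and the averaged lower bound is then transferred to the pointwise $\limsup$ by the elementary Cesàro comparison above. The remaining points — dominated convergence for $\int_0^S F_N$, the behaviour of $\mu_N$ on intervals at the vanishing scale $S/N$, and avoiding atoms at the partition endpoints — are routine but want a little care.
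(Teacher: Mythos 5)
Your proof is correct, and it takes a genuinely different route from the paper. The paper's engine is Lemma~\ref{lemma1}: the pair-count is bounded below by a quadratic form attached to a circulant band matrix, and the average over $s\in\{1,\dots,S\}$ replaces the (possibly sign-changing) Dirichlet-kernel eigenvalues by Fej\'er-kernel eigenvalues, whose nonnegativity then yields the lower bound; the two bullet points of Theorem~\ref{th2} are then treated by two separate applications of that lemma (a Lipschitz-failure argument for the non-AC case, a bounded-density-plus-truncation argument with conditional expectations for the AC case). Your approach replaces all of this by the single exact identity $\int_0^S F_N(s)\,ds=\int_0^1 A_N(t,S/N)^2\,dt-S$, which is the continuous analogue of the Fej\'er-kernel step: integrating $\mathbf 1[\|x_m-x_n\|\le s/N]$ in $s$ produces the triangle function $(L-\|\cdot\|)^+$, i.e.\ the self-convolution of an indicator, and the nonnegativity you need is then simply the fact that $A_N(t,L)^2\ge 0$. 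After that, a single Cauchy--Schwarz over a fixed partition, weak convergence of the empirical measures (with partition endpoints chosen off the countably many atoms), and the elementary Ces\`aro comparison between $\frac1{S^2}\int_0^S F$ and $\limsup F(s)/(2s)$ finish both cases at once: the supremum $\sup_{\mathcal P}\sum_K\nu(K)^2/\lambda(K)$ is $\int g^2$ in the AC case and $+\infty$ otherwise, and in the latter case the inequality is incompatible with $F(S)<\infty$ for any $S>0$. You have also correctly identified, and repaired, the factor-of-$2$ loss that a single-scale Cauchy--Schwarz suffers; the paper's averaging over $s=1,\dots,S$ serves exactly the same purpose, so the two proofs share this key insight while implementing it with different tools. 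What your version buys is elementarity (no spectral theory of circulant matrices, no explicit Dirichlet/Fej\'er kernels), a unified treatment of the two bullet points, and no need for the bounded-$g$-then-truncate detour; what the paper's version buys is that its Lemma~\ref{lemma1} is the same device used in the proof of Theorem~\ref{th1}, so the two theorems share infrastructure. One small point you may want to flesh out: the claim that the supremum over partitions is $+\infty$ when $\nu$ is not absolutely continuous requires covering the Lebesgue-null carrier of the singular part by finitely many short intervals and applying Cauchy--Schwarz to the resulting cells, not just contracting a single cell; this is routine but worth a sentence, especially for the singular continuous case.
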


We believe that Theorem \ref{th1} is quite remarkable; actually, we initially set out to prove its opposite, namely that a sequence which has Poissonian pair correlations does not have to be equidistributed. This seemed natural to us since equidistribution is controlled by the ``large-scale'' behavior, while pair correlations are determined by ``fine-scale'' behavior. Only after some time we realized why it is not possible to construct a non-equidistributed sequence which has Poissonian pair correlations; roughly speaking, the reason is that regions where too many points are situated contribute to the pair correlation function proportional to the square of the local density, and regions with fewer elements cannot compensate this larger contribution -- this is exactly what \eqref{fs2} and \eqref{fs22} also tell us.\\

There is a famous characterization of equidistribution in terms of exponential sums, called \emph{Weyl's criterion}. In a similar way one could characterize the asymptotic pair correlation function by exponential sums, and then assuming Poissonian pair correlations try to control the exponential sums in Weyl's criterion. However, we have not been able to do this; the problem is of course that the pair correlations are determined by ``fine'' properties at the scale of $1/N$, while equidistribution is a ``global'' property on full scale -- in other words, the trigonometric functions which dertermine the distribution of the pair correlations have frequencies of order $N$, while equidistribution is determined by trigonometric functions with constant frequencies. Instead of following such an approach, our proof of Theorem \ref{th1} is based on estimating the pair correlation function by a certain quadratic form, which is attached to a so-called \emph{circulant matrix}. We can calculate the eigenvalues and eigenvectors of this matrix, and after averaging over different values of $s$ reduce the problem to the fact that the Fej\'er kernel is a non-negative kernel.\\

Finally we return to the question of Arias de Reyna, which was mentioned above. Elliott and Hlawka independently proved that the imaginary parts $(\gamma_n)_{n \geq 1}$ of the non-trivial zeros of the Riemann zeta function are equidistributed. However, the proof of this result is simplified by the fact that the zeros of the zeta function are relatively dense; more precisely, the number of zeros up to height $T$ is roughly $\frac{T}{2 \pi} \log \left(\frac{T}{2\pi}\right) - \frac{T}{2 \pi}$. Thus to get a statement about the pseudorandomness of these zeros it is more interesting to consider the sequence of imaginary parts of zeros after normalizing them to have average distance one; that is, instead of investigating the equidistribution of the sequence $(\gamma_n)_{n \geq 1}$ itself one asks for the equidistribution of the normalized sequence $(x_n)_{n \geq 1} = \left(\frac{\gamma_n}{2 \pi} \log \left(\frac{\gamma_n}{2\pi e}\right) \right)_{n \geq 1}$. This seems to be a very difficult problem; see \cite{arias} for the current state of research in this direction. On the other hand, the famous \emph{Montgomery pair correlations conjecture} predicts a certain asymptotic distribution $R(s)$ for the pair correlations between elements of this normalized sequence $(x_n)_{n \geq 1}$. For the statement of this conjecture see \cite{mont}; we only mention that the distribution is not the same as in the case of a random sequence, but coincides with a distribution that also appears as the correlation function of eigenvalues of random Hermitian matrices and shows a certain ``repulsion'' phenomenon. Arias de Reyna asked whether Montgomery's pair correlation conjecture is compatible with equidistribution of the normalized zeros.\\

Note that the setting of this question is different from our setting; while in our setting the whole sequence is contained in $[0,1]$ and the average spacing of the first $N$ points is $1/N$, in the setting of Arias de Reyna's question the equidistribution property is requested for the reduction of the sequence $(x_n)_{n \geq 1}$ modulo one, while the pair correlations are calculated for the increasing sequence $(x_n)_{n \geq 1}$ itself, for which the average spacing between consecutive elements is 1 in the limit. Thus the results from the present paper cannot be applied to this setting. A general form of Arias de Reyna's question is: Let $(x_n)$ be an increasing sequence with average spacing 1, that is, $x_n/n \to 1$. Assume that $(x_n)_{n \geq 1}$ asymptotically has the pair correlation distribution $R(s)$ from Montgomery's conjecture. Is it possible that $(x_n)_{n \geq 1}$ is equidistributed? Is it possible that $(x_n)_{n \geq 1}$ is \emph{not} equidistributed? It is known that there exists a random process whose pair correlation function is $R(s)$ (see \cite{kls,kls2}), and one should be able to show that such a random process (or a further randomization of it) is equidistributed almost surely. So the answer to the first question is ``yes''. The answer to the second question should be ``yes'' as well, but we have not been able to construct an example.

\section{Preliminaries}

Throughout this section, we will use the following notation. Assume that $x_1, \dots, x_N$ are given. Let $F_N(s)$ be defined as in \eqref{fdef}. We partition the unit interval $[0,1)$ into subintervals $I_1, \dots, I_M$, where $I_m = [m/M,(m+1)/M)$, and we set
$$
y_m = \# \Big\{ 1 \leq n \leq N:~x_n \in I_m \Big\}.
$$
Then trivially we have
$$
\sum_{m=1}^M y_m = N.
$$
For notational convenience, we assume that the sequence $(y_m)_{1 \leq m \leq M}$ and the partition $I_1, \dots, I_M$ are extended periodically; in other words, we set
$$
y_m = y_{(m~\textup{mod}~M)}, \qquad \textrm{and} \qquad I_m = I_{(m~\textup{mod}~M)}, \qquad m \in \mathbb{Z}.
$$
Let $s \geq 1$ be an integer. We set
$$
H_{N,M} (s) = \sum_{m=1}^M~ \sum_{-s+1 \leq \ell \leq s-1} y_m y_{m + \ell}.
$$
Then by construction we have
\begin{eqnarray} 
H_{N,M}(s) & = & \sum_{m=1}^M ~~\sum_{n \in \{1, \dots, N\}:~x_n \in I_m}~ \# \left\{ 1 \leq k \leq N:~x_k \in \bigcup_{\ell=-s+1}^{s-1} I_{m+\ell} \right\} \nonumber\\
& \leq & \sum_{n=1}^N \left\{1 \leq k \leq N:~\|x_k - x_n\| \leq \frac{s}{M} \right\} \nonumber\\
& = & \left( \sum_{n=1}^N \left\{1 \leq k \leq N, ~n \neq k:~\|x_k - x_n\| \leq \frac{s}{M} \right\} \right) + N  \nonumber\\
& = & NF_N\left( \frac{sN}{M} \right) + N. \label{hnfn}
\end{eqnarray}
Thus a lower bound for $H_{N,M}$ implies a lower bound for $F_N$.\\

We have the following lemma.

\begin{lemma} \label{lemma1}
Let $y_1, \dots, y_M$ be non-negative real numbers whose sum is $N$, assume that $(y_m)_{1 \leq m \leq M}$ is extended periodically as above, and let $H_{N,M}(s)$ be defined as above. Let $S \geq 1$ be an integer for which $2S < M$. Then
$$
\frac{1}{S} \sum_{s=1}^S H_{N,M}(s) \geq \frac{SN^2}{M}.
$$
\end{lemma}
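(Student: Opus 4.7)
The plan is to recognize the average $\tfrac{1}{S}\sum_{s=1}^S H_{N,M}(s)$ as a positive-definite quadratic form in the Fourier coefficients of $(y_m)$, whose spectrum is governed by the Fejér kernel. First I would introduce the (cyclic) autocorrelation $c_\ell := \sum_{m=1}^M y_m y_{m+\ell}$, which is even and periodic of period $M$ in $\ell$, so that by construction
$$
H_{N,M}(s) \;=\; \sum_{\ell=-s+1}^{s-1} c_\ell.
$$
Swapping the order of summation over $s$ and $\ell$ immediately gives the Fejér-weighted identity
$$
\frac{1}{S} \sum_{s=1}^S H_{N,M}(s) \;=\; \sum_{\ell=-S+1}^{S-1} \left(1 - \frac{|\ell|}{S}\right) c_\ell.
$$
The hypothesis $2S < M$ is convenient here since it guarantees that the indices $\ell$ with $|\ell|<S$ represent $2S-1$ distinct residues modulo $M$, so no wraparound collapses the sum.

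Second, I would pass to the discrete Fourier transform on $\mathbb{Z}/M\mathbb{Z}$. Writing $\hat{y}_k = \sum_{m=1}^M y_m e^{-2\pi i k m/M}$, the Wiener--Khinchin-type identity gives
$$
c_\ell \;=\; \frac{1}{M}\sum_{k=0}^{M-1} |\hat{y}_k|^2 \, e^{2\pi i k \ell / M},
$$
which is just the spectral decomposition of the circulant matrix with first row $(c_0,c_1,\ldots,c_{M-1})$. Substituting into the averaged expression and interchanging summations yields
$$
\frac{1}{S} \sum_{s=1}^S H_{N,M}(s) \;=\; \frac{1}{M}\sum_{k=0}^{M-1} |\hat{y}_k|^2 \, K_S\!\left(\frac{2\pi k}{M}\right),
$$
where $K_S(\theta) := \sum_{|\ell|<S}(1-|\ell|/S)\, e^{i\ell\theta}$ is the classical Fejér kernel.

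The crucial fact is that $K_S(\theta) \geq 0$ for every real $\theta$, since $K_S(\theta) = \tfrac{1}{S}\bigl(\sin(S\theta/2)/\sin(\theta/2)\bigr)^2$. Every summand in the above spectral decomposition is therefore nonnegative. It remains to extract the $k=0$ contribution: there $\hat{y}_0 = \sum_m y_m = N$ and $K_S(0) = S$, so this single term already equals $SN^2/M$. Dropping the remaining nonnegative terms yields the claimed lower bound. I do not anticipate serious obstacles here; the only thing that required some thought is the correct identification of the averaged Dirichlet-type sum $\tfrac{1}{S}\sum_{s=1}^S \sum_{|\ell|<s} e^{i\ell\theta}$ with the Fejér kernel, which is precisely the positivity mechanism that makes the proof work.
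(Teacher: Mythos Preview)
Your argument is correct and is essentially the same as the paper's: both diagonalize the cyclic quadratic form via the discrete Fourier transform, average over $s$ to replace the Dirichlet kernel by the nonnegative Fej\'er kernel, and retain only the zero-frequency term $|\hat{y}_0|^2 K_S(0)/M = SN^2/M$. The paper phrases this in the language of circulant matrices and their eigenvectors rather than autocorrelations and Wiener--Khinchin, but the computation is identical.
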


\begin{proof}
The sum 
$$
\sum_{m=1}^M ~\sum_{-s+1 \leq \ell \leq s-1} y_m y_{m + \ell}
$$
in the definition of $H_{N,M}$ is a quadratic form which is attached to the matrix
$$
A^{(s)} = \left(a_{ij}^{(s)}\right)_{1 \leq i,j \leq M} = \left\{ \begin{array}{ll} 1 & \text{if $\textup{dist}(i-j) \leq s-1$,}\\ 0 & \text{otherwise,} \end{array} \right.
$$
where $\textup{dist}$ is the periodic distance such that $\textup{dist}(i-j) \leq s-1$ whenever
$$
i-j ~\in ~(-\infty,-M+s-1] \cup [-s+1,s-1] \cup [M-s+1,\infty).
$$
Thus $A^{(s)}$ is a band matrix which also has non-zero entries in its right upper and left lower corner. This matrix $A^{(s)}$ is symmetric, and it is of a form which is called \emph{circulant}. Generally, a circulant matrix is a matrix of the form
$$
\begin{pmatrix}
c_0 & c_1 & c_2 & \dots & c_{M-1} \\
c_{M-1} & c_0 & c_1 & \dots & c_{M-2} \\
c_{M-2} & c_{M-1} & c_0 & \ddots & c_{M-3} \\
\vdots & & \ddots & \ddots & \vdots \\
c_1 & & \dots & c_{M-1} & c_0 
\end{pmatrix},
$$
where each row is obtained by a cyclic shift of the previous row. We recall some properties of such matrices; for a reference see for example \cite[Chapter 3]{gray}. The eigenvectors of such a matrix are 
\begin{equation} \label{vk}
v_m = \left(1, \omega^m, \omega^{2m}, \dots, \omega^{(M-1)m} \right), \qquad m = 0, \dots, M-1,
\end{equation}
where $\omega = e^{\frac{2 \pi i}{M}}$. Note that these eigenvectors are pairwise orthogonal, and that they are independent of the coefficients of the matrix (they just depend on the fact that the matrix is circulant). The eigenvalue $\lambda_m$ to the eigenvector $v_m$ is given by
\begin{equation} \label{eigenv}
\lambda_m = \sum_{\ell=0}^{M-1} c_\ell \omega^m.
\end{equation}

We have already noted that our matrix $A^{(s)}$ is symmetric, which implies that all its eigenvalues are real. Furthermore, if we use the formula \eqref{eigenv} to calculate the eigenvalues of $A^{(s)}$ then we obtain
$$
\lambda_m^{(s)} = \sum_{\ell=-s+1}^{s-1} \omega^m = \frac{\sin \left(\frac{(2s-1)\pi m}{M} \right)}{\sin\left(\frac{\pi m}{M}\right)},
$$
which is the $s-1$-st order Dirichlet kernel $D_{s-1}$ (with period $1$ rather than the more common period $2 \pi$), evaluated at position $m/M$. Note that the largest eigenvalue is $\lambda_0^{(s)} = 2s-1$.\\

Since the eigenvectors of $A^{(s)}$ form an orthogonal basis, we can express our vector $(y_1, \dots, y_M)$ in this basis. We write
$$
(y_1, \dots, y_M) = \sum_{m=0}^{M-1} \varepsilon_m v_m
$$ 
for appropriate coefficients $(\varepsilon_m)_{1 \leq m \leq M}$. Note that we have $y_1+\dots+y_M=N$, which can be rewritten as $(y_1, \dots, y_M) v_0 = N$; thus we must have $\varepsilon_0 = N/M$ (since the eigenvectors are orthogonal). Furthermore, we have
\begin{eqnarray} 
H_{N,M}(s) & = & \left( \sum_{m=0}^{M-1} \varepsilon_m v_m \right)^T A^{(s)} \left( \sum_{m=0}^{M-1} \varepsilon_m v_m \right) \nonumber\\
& = & \sum_{m=0}^{M-1} \lambda_m^{(s)} \varepsilon_m^2 \|v_m\|_2^2 \nonumber\\
& = & M \sum_{m=0}^{M-1} \lambda_m^{(s)} \varepsilon_m^2, \label{epsin}
\end{eqnarray}
again by orthogonality. However, from this we cannot deduce that $H_{N,M}(s) \geq M \lambda_0^{(s)} \varepsilon_0^2 = (2s-1) N^2/M$, since (in general) some of the eigenvalues are negative. To solve this problem we will make a transition from the Dirichlet kernel to the Fej\'er kernel, which is non-negative.\\

We repeat that the eigenvectors of $A^{(s)}$ depend on $M$, but not on $s$. Let $S \geq 1$ be an integer and consider
$$
A^{(\Sigma)} = \frac{1}{S} \sum_{s=1}^S A^{(s)},
$$
where we assume that $S < 2M$ (to retain the structure of the matrix). Then clearly the eigenvectors of this matrix are also given by $v_0, \dots, v_{M-1}$, and the corresponding eigenvalues are
$$
\lambda_m^{(\Sigma)} = \frac{1}{S} \sum_{s=1}^{S} \lambda_m^{(s)} = \frac{1}{S} \sum_{s=1}^{S} ~\sum_{\ell=-s+1}^{s-1} \omega^m, \qquad 0 \leq m \leq M-1.
$$
Now $\lambda_m^{(\Sigma)}$ can be identified as the Fej\'er kernel of order $S-1$ (with period $1$ instead of $2 \pi$), evaluated at position $m/M$. It is well-known that the Fej\'er kernel is non-negative, so we have
\begin{equation} \label{non-neg}
\lambda_m^{(\Sigma)} \geq 0, \qquad m = 0, \dots, M-1,
\end{equation}
and we also have
$$
\lambda_0^{(\Sigma)} = \frac{1}{S} \sum_{s=1}^{S} (2s-1) = S.
$$
Now using again the considerations which led to \eqref{epsin} we can show that
$$
\frac{1}{S} \sum_{s=1}^S H_{N,M}(s) \geq M \sum_{m=0}^{M-1} \lambda_m^{(\Sigma)} \varepsilon_m^2 \geq M \lambda_0^{(\Sigma)} \varepsilon_0^2  = SN^2/M,
$$
where \eqref{non-neg} played a crucial role. This proves the lemma.
\end{proof}

\section{Proof of Theorem \ref{th1}}

Let $(x_n)_{n \geq 1}$ be a sequence of real numbers in $[0,1]$, and assume that it is \emph{not} equidistributed. Thus there exists an $a \in (0,1)$ for which 
$$
\frac{1}{N} \sum_{n=1}^N \mathds{1}_{[0,a)} (x_n) \not\to a \qquad \text{as $N \to \infty$}
$$ 
(here, and in the sequel, $\mathds{1}_B$ denotes the indicator function of a set $B$). However, for this value of $a$ by the Bolzano--Weierstra{\ss} theorem there exists a subsequence $(N_r)_{r \geq 1}$ of $\mathbb{N}$ along which a limit exists; that is, there exists a number $b \neq a$ such that
\begin{equation} \label{limr}
\lim_{r \to \infty} \frac{1}{N_r} \sum_{n=1}^{N_r} \mathds{1}_{[0,a)} (x_n) = b.
\end{equation}

Let $\varepsilon >0$ be given, and assume that $\varepsilon$ is ``small''. Choose an integer $S$ (which is ``large''). Let $r \geq 1$ be given, let $N_r$ be from the subsequence in the previous paragraph, and consider the points $x_1, \dots, x_{N_r}$. Let $\mathcal{E}$ denote the union of the sets
$$
\left[0,\frac{2S}{N_r} \right] \cup \left[a - \frac{2S}{N_r}, a + \frac{2S}{N_r} \right] \cup \left[1-\frac{2S}{N_r},1 \right].
$$
Furthermore, we set $B_1 = [0,a] \backslash \mathcal{E}$ and $B_2=[a,1] \backslash \mathcal{E}$.\\

First consider the case that $\# \left\{1 \leq n \leq N_r:~ x_n \in \mathcal{E} \right\} \geq \varepsilon N_r$. Then by the pigeon hole principle there exists an interval of length at most $1/N_r$ in $\mathcal{E}$ which contains at least $\varepsilon N_r/(8S)$ elements of $\{x_1, \dots, x_{N_r}\}$. All of these numbers are within distance $1/N_r$ of each other, which implies that
$$
N_r F_{N_r}(1) \geq \left(\frac{\varepsilon N_r}{8S} \right)^2 - N_r.
$$
If this inequality holds for infinitely many $r$, then
$$
\limsup_{r \to \infty} F_{N_r}(1) = \infty,
$$
which implies that the pair correlations distribution cannot be asymptotically Poissonian.\\

Thus we may assume that $\# \left\{1 \leq n \leq N:~ x_n \in \mathcal{E} \right\} < \varepsilon N_r$ for all elements of the subsequence $(N_r)_{r \geq 1}$. Then $[0,1] \backslash \mathcal{E} = B_1 \cup B_2$ contains at least $(1-\ve) N_r$ elements of $\{x_1, \dots, x_{N_r}\}$. Consequently, if $r$ is sufficiently large, by \eqref{limr} we have
$$
\# \{1 \leq n \leq N_r: x_n \in B_1\} \geq (b-2\ve) N_r,
$$
and
$$
\# \{1 \leq n \leq N_r: x_n \in B_2\} \geq ((1-b)-2\ve) N_r.
$$
We assume that $r$ is so large that we can find positive integers $M_1, M_2$ for which $a/M_1 \approx (1-a)/M_2 \approx \frac{1}{N_r}$; more precisely, we demand that
\begin{equation} \label{dem}
\frac{a}{M_1} \in \left[\frac{1-\ve}{N_r}, \frac{1}{N_r} \right], \qquad \frac{1-a}{M_2} \in \left[ \frac{1-\ve}{N_r}, \frac{1}{N_r} \right].
\end{equation}
We partition $B_1$ and $B_2$ into $M_1$ and $M_2$ disjoint subintervals of equal length, respectively, and write $y_1, \dots, y_{M_1}$ and $z_1, \dots, z_{M_2}$ for the number of elements contained in each of these subintervals (we assume that the subintervals are sorted in the ``natural'' order from left to right). Next, for $s \in \{1, \dots, S\}$ we define
$$
H_{M_1}^* (s) = \sum_{m=1}^{M_1} ~\sum_{-s+1 \leq \ell \leq s-1} y_m y_{m + \ell}
$$
and
$$
H_{M_2}^* (s) = \sum_{m=1}^{M_2} ~\sum_{-s+1 \leq \ell \leq s-1} z_m z_{m + \ell}.
$$
By construction, $\sum_{m=1}^{M_1} y_m \geq (b-2\ve) N_r$ and $\sum_{m=1}^{M_2} z_m \geq ((1-b)-2\ve)N$. Also by construction the cyclic extension is not necessary here, provided that $r$ is sufficiently large; by excluding all the points in $\mathcal{E}$ we have $y_1= \dots = y_S = 0$ and $y_{M_1-S+1} = \dots = y_{M_1}= 0$, and the same holds for the $z_m$'s.\\

Then by Lemma \ref{lemma1} and by our choice of $M_1,M_2$ we have
\begin{equation} \label{h1*}
\frac{1}{S} \sum_{s=1}^S H_{M_1}^* (s) \geq \frac{S ((b-2\ve)N)^2}{M_1} \geq \frac{S (b-2\ve)^2 N_r (1-\ve)}{a},
\end{equation}
and
\begin{equation} \label{h2*}
\frac{1}{S} \sum_{s=1}^S H_{M_2}^* (s) \geq \frac{S ((1-b-2\ve)N)^2}{M_2} \geq \frac{S (1-b-2\ve)^2 N_r (1-\ve)}{1-a}.
\end{equation}
As in the calculation leading to \eqref{hnfn} we can obtain a lower bound for the pair correlation function $F_{N_r}(s)$ from the lower bounds for $H_{M_1}^*$ and $H_{M_2}^*$. More precisely, we obtain
$$
N_r F_{N_r} \left(s\right) + N_r \geq H_{M_1}^*(s) + H_{M_2}^*(s),
$$
and accordingly, by \eqref{h1*} and \eqref{h2*}, we have
\begin{eqnarray}
& & \frac{1}{S} \sum_{s=1}^S \left( N_r F_{N_r} (s) + N_r \right) \nonumber\\
& \geq & (1 - \ve) S N_r \left(\frac{(b-\ve)^2}{a}+ \frac{(1-b-\ve)^2}{1-a}\right). \label{impl}
\end{eqnarray}
Now note that for $0 \leq a,b \leq 1$ we can only have $b^2/a + (1-b)^2/(1-a)=1$ if $a=b$; however, this is ruled out by assumption. For all other pairs $(a,b)$ we have $b^2/a + (1-b)^2/(1-a)>1$, and thus \eqref{impl} implies that
$$
\frac{1}{S} \sum_{s=1}^S N_r F_{N_r}(s) \geq N_r \left(S (1+2c_{\ve}) -1\right)
$$
for a positive constant $c_{\ve}$ depending only on $\ve$, provided that $\varepsilon$ is sufficiently small. This implies
\begin{equation} \label{fnse}
\frac{1}{S} \sum_{s=1}^S F_{N_r} (s) \geq S (1+2c_{\ve}) - 1 \geq S (1+c_{\ve}) \left(1 + \frac{1}{S} \right),
\end{equation}
where the last inequality holds under the assumption that $S$ is sufficiently large. Consequently there exists an $s \in \{1,\dots, S\}$ such that
\begin{equation} \label{fnr}
F_{N_r}(s) \geq (1+c_{\ve}) 2s,
\end{equation}
since otherwise \eqref{fnse} is impossible.\\

For every sufficiently large $N_r$ in the subsequence in \eqref{limr} such an $s \in \{1, \dots, S\}$ exists; accordingly, there is an $s$ such that for infinitely many $r$ we have \eqref{fnr}. Thus for this $s$ we have
$$
\limsup_{r \to \infty} \frac{F_{N_r}(s)}{2s} \geq (1+c_{\ve}) > 1,
$$
which proves the theorem.

\section{Proof of Theorem \ref{th2}}

First assume that the measure $\mu_G$ defined by the asymptotic distribution function $G(x)$ is not absolutely continuous with respect to the Lebesgue measure. A function which is not absolutely continuous is not Lipschitz continuous as well. Thus there is an $\varepsilon >0$ such that for every $\delta>0$ there exists an interval $I \subset [0,1]$ such that
$$
\lambda(I) \leq \delta, \qquad \text{but} \qquad \mu_G(I) \geq \ve,
$$
where $\lambda$ denotes the Lebesgue measure (that is, the length) of $I$. Let $\hat{I}$ denote the interval $I$ after removing a subinterval of length $N^{-1}$ from the left and right end, respectively (to remove the influence of the cyclic ``overlap'' in Lemma \ref{lemma1}). Then for sufficiently large $N$ the interval $\hat{I}$ contains at least $\ve N/2$ elements of $(x_n)_{1 \leq n \leq N}$. Set $M = \lceil \lambda(I) N \delta^{-1/2} \rceil$, split $I$ into $M$ subintervals, and denote the number of elements of the set $\{x_1, \dots, x_N\} \cap \hat{I}$ contained in each of these subintervals by $y_1, \dots, y_M$, respectively. Let $\hat{N} = y_1 + \dots + y_M$, and define $H_{M,\hat{N}}(1) = y_1^2 + \dots + y_M^2$. Applying Lemma \ref{lemma1} and using a rescaled version of \eqref{hnfn} we have
\begin{eqnarray*}
N F_N \left(2 \delta^{1/2} \right) + N & \geq &  N F_N \left(\frac{2 N \lambda(I)}{M} \right) + N \\
& \geq & H_{M,\hat{N}} (1) \\
& \geq & \frac{(\ve N/2)^2}{M} \\
& \geq & \frac{\ve^2 N}{5 \delta^{1/2}}
\end{eqnarray*}
for sufficiently large $N$. Since $\ve$ is fixed and $\delta$ can be chosen arbitrarily small, this proves the theorem when $\mu_G$ is not absolutely continuous.\\

Now assume that the measure $\mu_G$ defined by $G(x)$ is absolutely continuous with respect to the Lebesgue measure, and thus has a density $g(x)$. In the sequel we will think of $([0,1],\mathcal{B}([0,1]), \lambda)$ as a probability space, and write $\mathbb{E}$ for the expected value (of a measurable real function) with respect to this space. We split the unit interval into $2^R$ intervals of equal lengths. Let $\mathcal{F}_R$ denote the $\sigma$-field generated by these intervals. Assume for simplicity that $g$ is bounded on $[0,1]$. Then we can use arguments similar to those above to prove that for given $\ve>0$ there exist infinitely many values of $s$ such that for each of these values we have
$$
\frac{F\left(s\right)}{2s} \geq \mathbb{E} \left( \left(\mathbb{E} \big( g | \mathcal{F}_R \big) \right)^2 \right) - \ve,
$$
where $\mathbb{E} \left( g | \mathcal{F}_k \right)$ denotes the conditional expectation of $g$ under the $\sigma$-field $\mathcal{F}_R$.\footnote{In the proof of Theorem \ref{th1}, the role of the second moment of the conditional expectation function is played by the expression $b^2/a + (1-b)^2/(1-a)$, which appears in line \eqref{impl}.} Note that a direct generalization of the proof of Theorem \ref{th1} only guarantees the existence of \emph{one} such integer $s$; however, we can use the fact that $F(s)$ is monotonically increasing to show that there actually must be infinitely many such values of $s$. The family $(\mathcal{F}_R)_{R \geq 1}$ forms a filtration whose limit is $\mathcal{B}([0,1])$, in the sense that $\mathcal{B}([0,1])$ is the sigma-field generated by $\bigcup_{R \geq 1} \mathcal{F}_R$. Thus by the convergence theorem for conditional expectations (also known as \emph{L\'evy's zero-one law}) we have
$$
\lim_{R \to \infty} \mathbb{E} \left( \left(\mathbb{E} \big( g | \mathcal{F}_R \big) \right)^2 \right) = \mathbb{E} \left(g^2 \right) = \int_0^1 g(x)^2~dx,
$$
which proves the theorem in the case when $g$ is bounded. Finally, if $g$ is not bounded then we can apply the argument above to a truncated version $g_{\textup{trunc}}$ of $g$ and show that in this case
$$
\limsup_{s \to \infty} \frac{F(s)}{2s} \geq \int_0^1 g_{\textup{trunc}}(x)^2~dx.
$$
By raising the level where $g$ is truncated this square-integral can be made arbitrarily close to $\int_0^1 g(x)^2~dx$, or arbitrarily large in case we have $\int_0^1 g(x)^2~dx= \infty$. This proves the theorem.

\section*{Acknowledgements}

We want to thank Ivan Izmestiev (University of Fribourg) for his very helpful comments during the preparation of this manuscript.


\end{document}